\documentclass[12pt]{article}
\usepackage{amsmath,amsopn,amssymb,amsthm,amsfonts}
\usepackage[T2A]{fontenc}
\usepackage[cp1251]{inputenc}
\usepackage{longtable}

\textwidth=17cm \oddsidemargin=0.5cm \topmargin=-1cm
\textheight=24cm

\newtheorem{theorem}{Theorem}[section]

\newcommand\g{{\mathfrak g}}
\newcommand\h{{\mathfrak h}}

\newcommand\dd{{\mathfrak d}}

\newcommand\rr{{\mathfrak r}}

\newcommand{\R}{\mathbb{R}}

\begin{document}

{\bf \large
\centerline{N.~K.~Smolentsev, I.~Y.~Shagabudinova}

\vspace{3mm}
\centerline{On the classification of left-invariant para-K\"{a}hler structures}
\centerline{on four-dimensional Lie groups}
\vspace{3mm}
}
\begin{abstract}
A first classification of para-K\"{a}hler structures on four-dimensional Lie algebras was obtained by D. Calvaruzo in 2015. In this paper, we propose another classification based on the classification of symplectic Lie algebras. For each four-dimensional symplectic Lie algebra, compatible para-complex structures and the corresponding pseudo-Riemannian metrics are found in explicit form. This leads to the classification of Sasaki para-contact structures on five-dimensional contact Lie algebras with a nontrivial center.
\end{abstract}

\section{Introduction} \label{Intro}
Complex manifolds are among the most active research fields in differential geometry.
In recent years, their paracomplex analogue attracted a growing number of researchers \cite{Aleks}, \cite{Calva-Fino}, \cite{Cru-Fort-Gadea}, \cite{Smolen-19}.
If we consider an even-dimensional Lie group, it is a natural problem to determine its invariant complex and paracomplex structures.
The structures on Lie groups of small dimensions are of particular interest in view of the possibilities of their classification.
In particular, four-dimensional complex Lie algebras, symplectic and pseudo-K\"{a}hler Lie algebras were classified by Owando \cite{Ovando2000} - \cite{Ovando2006-1}.
A classification of para-K\"{a}hler structures on four-dimensional Lie groups was obtained by D. Calvaruso \cite{Calvar-15} based on the classification of product structures \cite{And-Barbe-Dotti-Ovando}.
For each given product structure $J$, Calvaruso found all possible symplectic structures $\omega$ compatible with $J$.

In this paper, we return to the classification of four-dimensional para-K\"{a}hler Lie algebras for the following reasons.
First, it seems more natural for us to proceed from a given symplectic structure $\omega$ on a Lie algebra $\g$ and to search all compatible para-complex structures $J$ for it.
Secondly, with this approach, we obtain a series of decompositions of the Lie algebra $\g$ into a direct sum of subalgebras, $\g = \g_+\oplus \g_-$.
And finally, the explicit form of para-complex structures $J$ on symplectic Lie algebras allows us possible to explicitly obtain para-contact metric structures on five-dimensional Lie algebras, which are central extensions of the symplectic Lie algebras.
In this case, the operators $J$ of para-complex structures determine the affinors $\varphi$ of para-contact structures.
Therefore, the classification of four-dimensional para-K\"{a}hler Lie algebras obtained with this approach leads to the classification of Sakaki parakontact structures on five-dimensional contact Lie algebras with a nontrivial center.

In this paper, for each four-dimensional symplectic Lie algebra $(\g, \omega)$ we find all para-complex structures $J$ compatible with $\omega$.
As a result, we obtain para-K\"{a}hler Lie algebras $(\g, \omega, J, g)$ with a pseudo-Riemannian metric $g = \omega \cdot J$.
In addition, we find the geometric characteristics of each structure $(\g, \omega, J, g)$.

All calculations were made in the Maple system according to the usual formulas for the geometry of left-invariant structures.

\section{Preliminaries} \label{Preface}
A (1,1)-tensor $J$ on manifold $M$ is called an {\it almost paracomplex} structure if $J^2= Id$ and its eigenvalues $-1$ and $+1$ have the same rank.
The integrability of $J$ is expressed by condition
\begin{equation}\label{N_J}
N_J(X,Y) = [X,Y] + [JX,JY] -J[JX,Y] -J[X,JY] = 0,
\end{equation}
for all tangent vector fields $X, Y$.
When (\ref{N_J}) holds, $J$ is called a {\it para-complex} structure.
A pseudo-Riemannian metric $g$ on $M$ is said to be compatible with $J$ if
\begin{equation}\label{g_J}
g(JX,Y) + g(X,JY) = 0,
\end{equation}
for all tangent vector fields $X, Y$.
In such a case, the pair $(g,J)$ is called an {\it almost para-Hermitian} structure.
It is well known that the $g$ is necessarily of neutral signature $(n,n)$.

When (\ref{g_J}) holds, one can consider the fundamental two-form $\omega(X,Y):= g(X,JY)$ of the almost para-Hermitian structure.
If $\omega$ is symplectic, then $(M,g,J,\omega)$ is said to be an {\it almost para-K\"{a}hler} manifold.
In particular, a para-K\"{a}hler structure is formed by a pseudo-Riemannian metric $g$, a compatible para-complex structure $J$ and the associated symplectic form $\omega$.
We explicitly observe that because of the compatibility conditions
\begin{equation}\label{compatible}
\omega(X,Y) = g(X,JY),\ g(X,Y) = \omega(X,JY),\ \omega(JX,JY) + \omega(X,Y) = 0,
\end{equation}
any two among $g$, $J$ and $\omega$ uniquely determine the third one.
For this reason, we simply denote a para-K\"{a}hler structure by $(\omega,J)$.

Let now $G$ denote a $2n$-dimensional (simply connected) Lie group and $\g$ its Lie algebra. If $(g,J)$ is a left-invariant para-K\"{a}hler structure on $G$, then tensors $g$ and $J$ are uniquely determined at the algebraic level by corresponding tensors (denoted again by the same symbols) defined on the Lie algebra $\g$. More precisely, $J$
and $g$ respectively correspond to an endomorphism and an inner product on $\g$.
Then $(\g, \omega, J, g)$ is called the para-K\"{a}hler Lie algebra.
Note that any para-complex structure $J$ on the Lie algebra $\g$ yields a decomposition of $\g$ into a direct sum of subalgebras:
$$
\g = \g_+\oplus g_-,\ \text{where}\ J|_{\g_+} = Id, J|_{\g_-} = -Id.
$$

The four-dimensional symplectic Lie algebras were found in \cite{Ovando2006}.
The para-complex structures on four-dimensional solvable Lie algebras were obtained in \cite{And-Barbe-Dotti-Ovando}. In article \cite{Calvar-15}, all possible symplectic structures are found that are compatible with a para-complex structure.
Given all these results, we obtain the following list of four-dimensional symplectic Lie algebras admitting a para-K\"{a}hler structure (Table 1).
This table shows the nonzero Lie brackets of algebras in the basis $e_1, e_2, e_3, e_4$ and the symplectic structures in the dual basis $e^1, e^2, e^3, e^4$.

\begin{longtable}[H]{|l|l|l|}
\hline
$\g$ & Lie brackets & Symplectic structure\\

\hline
$\rr_{2}\rr_{2}$ & $[e_1,e_2] = e_2, [e_3,e_4] = e_4$ &
$\omega = e^1\wedge e^2 + \lambda e^1\wedge e^3 + e^3\wedge e^4 , \lambda \ge 0$
\\

\hline
$\rr\h_{3}$ & $[e_1,e_2] = e_3$ &
$\omega = e^1\wedge e^4 + e^2\wedge e^3$
\\

\hline
$\rr\rr_{3,0}$ & $[e_1,e_2] = e_2, [e_1,e_3] = 0$ &
$\omega = e^1\wedge e^2 + e^3\wedge e^4$
\\

\hline
$\rr\rr_{3,-1}$ & $[e_1,e_2] = e_2, [e_1,e_3] = -e_3$ &
$\omega = e^1\wedge e^4 + e^2\wedge e^3$
\\

\hline
$\rr_{2}'$ &
\begin{tabular}{l}
  $[e_1,e_3] = e_3, [e_1,e_4] = e_4, [e_2,e_3] = e_4$, \\
  $[e_2,e_4] = -e_3$ \\
\end{tabular} &
$\omega = e^1\wedge e^4 + e^2\wedge e^3$
\\

\hline
$\rr_{4,0}$ &
$[e_4,e_1] = e_1, [e_4,e_2] =0, [e_4,e_3] = e_2$ &
\begin{tabular}{l}
  $\omega_+ = e^1\wedge e^4 + e^2\wedge e^3$, \\
  $\omega_- = e^1\wedge e^4 -e^2\wedge e^3$ \\
\end{tabular}
 \\

\hline
$\rr_{4,-1}$ &
$[e_4,e_1] = e_1, [e_4,e_2] =-e_2, [e_4,e_3] = e_2-e_3$ &
$\omega = e^1\wedge e^3 + e^2\wedge e^4$
\\

\hline
$\rr_{4,-1,\beta}$ &
$[e_4,e_1] = e_1, [e_4,e_2] =-e_2, [e_4,e_3] = \beta e_3$ &
$\omega = e^1\wedge e^2 + e^3\wedge e^4$, $-1<\beta < 0$
\\

\hline
$\rr_{4,-1,-1}$ &
$[e_4,e_1] = e_1, [e_4,e_2] =-e_2, [e_4,e_3] = -e_3$ &
$\omega = e^1\wedge e^2 + e^3\wedge e^4$
\\

\hline
$\rr_{4,-\alpha,\alpha}$ &
$[e_4,e_1] = e_1, [e_4,e_2] =-\alpha e_2, [e_4,e_3] = \alpha e_3$ &
$\omega = e^1\wedge e^4 + e^2\wedge e^3$, $0<\alpha <1$
\\

\hline
$\dd_{4,1}$ &
\begin{tabular}{l}
  $[e_1,e_2] = e_3, [e_4,e_3] =e_3, [e_4,e_1] =e_1$, \\
  $[e_4,e_2] =0$ \\
\end{tabular} &
\begin{tabular}{l}
  $\omega_1 = e^1\wedge e^2 -e^3\wedge e^4$, \\
  $\omega_2 = e^1\wedge e^2 -e^3\wedge e^4 +e^2\wedge e^4$ \\
\end{tabular}
\\

\hline
$\dd_{4,2}$ &
\begin{tabular}{l}
  $[e_1,e_2] = e_3, [e_4,e_3] =e_3, [e_4,e_1] =2e_1$, \\
  $[e_4,e_2] =-e_2$ \\
\end{tabular} &
\begin{tabular}{l}
  $\omega_1 = e^1\wedge e^2 -e^3\wedge e^4$, \\
  $\omega_2 = e^1\wedge e^4 +e^2\wedge e^3$, \\
  $\omega_3 = e^1\wedge e^4 -e^2\wedge e^3$ \\
\end{tabular}
\\

\hline
$\dd_{4,\lambda}$ &
\begin{tabular}{l}
  $[e_1,e_2] = e_3, [e_4,e_3] =e_3, [e_4,e_1] =\lambda e_1$, \\
  $[e_4,e_2] =(1-\lambda)e_2$ \\
\end{tabular} &
$\omega = e^1\wedge e^2 -e^3\wedge e^4$, $\lambda\ge \frac 12, \lambda\neq 1, 2$
\\

\hline
$\h_{4}$ &
\begin{tabular}{l}
   $[e_1,e_2] = e_3, [e_4,e_3] =e_3, [e_4,e_1] =\frac 12 e_1$, \\
  $[e_4,e_2] =e_1+\frac 12 e_2$ \\
\end{tabular} &

\begin{tabular}{l}
  $\omega_+ = e^1\wedge e^2 -e^3\wedge e^4$, \\
  $\omega_- = -e^1\wedge e^2 +e^3\wedge e^4$ \\
\end{tabular}
\\

\hline
$\mathbb{R}^{4}$ &
&
$\omega = e^1\wedge e^2 +e^3\wedge e^4$
\\
\hline
\caption{Four-dimensional symplectic Lie algebras admitting a para-K\"{a}hler structure}
\end{longtable}

In the above list, $\rr_2 = aff(\mathbb{R})$ is the Lie algebra of the Lie group of affine motions of $\mathbb{R}$;
$\rr_{2}'$ is the real Lie algebra underlying to the complex Lie algebra $aff(\mathbb{C})$; $\rr\rr_{3,0}$, $\rr\rr_{3,-1}$ and $\rr\h_{3}$ are the trivial extensions of the Lie algebra $\mathfrak{e}(2)$ of the group of rigid motions of $\mathbb{R}^2$, the Lie algebra $\mathfrak{e}(1,1)$ of the group of rigid motions of the Minkowski two-space and of the Heisenberg Lie algebra $\h_{3}$, respectively.

Let $\nabla$ be the Levi-Civita connection corresponding to a left-invariant pseudo-Riemannian metric $g$.
Recall that curvature tensor $R(X,Y) = [\nabla_X, \nabla_Y] -\nabla_{[X,Y]}$ is calculated by the formula  $R_{ijk}^{s}=\Gamma_{ip}^{s}\Gamma_{jk}^{p}-\Gamma_{jp}^{s}\Gamma_{ik}^{p} -C_{ij}^{p}\Gamma_{pk}^{s}$, where $\Gamma_{ij}^m = \frac 12 g^{km}\left (C_{ij}^p g_{pk} +C_{ki}^p g_{pj} +C_{kj}^p g_{ip} \right )$ are Christoffel symbols and $C_{ij}^p$ are structure constants of $\g$.
The Ricci tensor is a convolution of the curvature tensor for the first and fourth (upper) indices $Ric_{jk}=R_{ijk}^i$.

\section{Classification of four-dimensional para-K\"{a}hler Lie algebras} \label{Lie_algebras}

\begin{theorem} \label{T1}
A four-dimensional para-K\"{a}hler Lie algebra is isomorphic to one of the Lie algebras, endowed with symplectic and compatible para-complex structure as listed in the Sections \ref{G_1} -- \ref{G_15}.
\end{theorem}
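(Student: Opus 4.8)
The statement is a classification up to isomorphism, so the proof is necessarily an exhaustive case analysis built on Table~1. The plan is to exploit the equivalence, valid for any almost product structure, between a para-complex structure $J$ compatible with $\omega$ and a splitting of $\g$ into two Lagrangian subalgebras. Writing $\g_\pm$ for the $(\pm1)$-eigenspaces of $J$, the compatibility relation $\omega(JX,JY)+\omega(X,Y)=0$ from (\ref{compatible}) forces $\omega|_{\g_+}=\omega|_{\g_-}=0$, so each $\g_\pm$ is an isotropic, hence (by dimension count) Lagrangian subspace; moreover the integrability condition (\ref{N_J}) is equivalent to $[\g_+,\g_+]\subseteq\g_+$ and $[\g_-,\g_-]\subseteq\g_-$, the mixed terms of $N_J$ vanishing identically. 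Conversely, any decomposition $\g=\g_+\oplus\g_-$ into complementary Lagrangian subalgebras defines such a $J$ by $J|_{\g_\pm}=\pm\,\mathrm{Id}$, and then $g=\omega(\cdot,J\cdot)$ is the associated neutral metric. Thus finding all para-K\"ahler structures on a given $(\g,\omega)$ amounts to listing all bi-Lagrangian subalgebra decompositions.

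For each of the entries of Table~1 I would proceed as follows. First write an arbitrary endomorphism $J$ in the given basis as a $4\times4$ matrix and impose the three algebraic conditions $J^2=\mathrm{Id}$, $\operatorname{tr}J=0$ (equal ranks of the eigenspaces), and $\omega(JX,JY)=-\omega(X,Y)$; the last is linear in the entries of $J$ once the $C_{ij}^k$ and the components of $\omega$ are substituted. Equivalently, and usually more efficiently, parametrize a candidate Lagrangian plane $\g_+=\mathrm{span}\{u,v\}$ by its coordinates, impose isotropy $\omega(u,v)=0$ and closedness $[u,v]\in\g_+$, then do the same for a complementary Lagrangian subalgebra $\g_-$. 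Either route reduces the problem to a polynomial system in a few real parameters, which is solved explicitly; the nonlinearity enters only through the quadratic relation $J^2=\mathrm{Id}$ (equivalently through the isotropy and closedness conditions).

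Once the full solution set is obtained, I would reduce it modulo the natural equivalence of para-K\"ahler Lie algebras, namely Lie-algebra automorphisms of $\g$ that preserve the symplectic form up to scale. Computing the automorphism group of each $\g$ and its induced action on the parameters of $J$ collapses the continuous and discrete families of solutions to a minimal set of canonical representatives, which are exactly the structures recorded in Sections~\ref{G_1}--\ref{G_15}. To confirm that the surviving representatives are genuinely inequivalent I would compute invariants of the metric $g=\omega\cdot J$---the Ricci operator, scalar curvature and related curvature data via the formulas of Section~\ref{Preface}---and verify that they separate the remaining cases.

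The main obstacle is not any single computation but the bookkeeping required to guarantee both completeness and non-redundancy of the list. The delicate points are the algebras carrying a continuous parameter ($\rr_{4,-1,\beta}$, $\rr_{4,-\alpha,\alpha}$, $\dd_{4,\lambda}$) and those admitting several inequivalent symplectic forms ($\rr_{4,0}$, $\dd_{4,1}$, $\dd_{4,2}$, $\h_4$), where the interplay between the $J$-parameters, the structure parameter, and the automorphism action must be analyzed carefully so that no admissible $J$ is overlooked and no two listed structures turn out to be isomorphic. These steps are carried out with the Maple computation mentioned above; the role of the theorem's proof is to organize them into the exhaustive verification over the whole of Table~1.
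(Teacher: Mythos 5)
Your proposal is correct in substance, and its computational core is exactly the paper's proof of Theorem~\ref{T1}: for each $(\g,\omega)$ in Table~1, write $J$ as a $4\times4$ matrix in the given basis, impose $J^{2}=\mathrm{Id}$, the skew-compatibility $\omega(JX,Y)+\omega(X,JY)=0$ and the integrability condition $N_J=0$ (in coordinates $J^l_iJ^m_jC^k_{lm}-J^l_iJ^k_mC^m_{lj}-J^l_jJ^k_mC^m_{il}+C^k_{ij}=0$), solve the resulting polynomial system by computer algebra, and set $g=\omega\cdot J$; the paper then records the parametric families of solutions together with their Ricci data. You genuinely diverge in two places. First, you promote the bi-Lagrangian reformulation (compatible integrable $J$ $\Longleftrightarrow$ $\g=\g_+\oplus\g_-$ with both summands $\omega$-isotropic subalgebras) to the main computational device; the paper states this eigenspace decomposition in Section~\ref{Preface} but never uses it in the proof, working directly with the matrix equations. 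Your reformulation is correct (the mixed terms of $N_J$ do vanish identically, and complementary isotropic subspaces of a four-dimensional symplectic space are automatically two-dimensional, which also makes your extra condition $\operatorname{tr}J=0$ redundant), and it buys conceptual clarity. Second, you append a reduction modulo automorphisms of $(\g,\omega)$ and a verification of pairwise non-equivalence. The paper does no such step: the structures in Sections~\ref{G_1}--\ref{G_15} retain their free parameters $a,b,c,\dots$, and the theorem asserts only completeness of the list, not that distinct entries or distinct parameter values are non-isomorphic. Had you actually carried out that reduction, your final list would not coincide with the sections as printed.

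Two cautions about the added material and one slip. Your proposed certification of non-redundancy by curvature invariants cannot work as described: a large portion of the listed structures are flat or Ricci-flat (for instance all three families on $\rr_2\rr_2$ with $\lambda>0$, and both structures on $\rr\h_3$), so the Ricci operator and scalar curvature separate nothing there; genuine non-redundancy would require computing automorphism orbits explicitly. Separately, in your ``first route'' you list only $J^{2}=\mathrm{Id}$, $\operatorname{tr}J=0$ and $\omega(JX,JY)=-\omega(X,Y)$ as the equations to solve: as written this omits the integrability condition entirely (the structure constants $C^k_{ij}$ enter only there, not in the compatibility condition), and the condition $\omega(JX,JY)=-\omega(X,Y)$ is quadratic, not linear, in the entries of $J$ --- the linear version is $\omega(JX,Y)+\omega(X,JY)=0$, equivalent to it modulo $J^{2}=\mathrm{Id}$. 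Since your Lagrangian route does impose closedness of both summands under the bracket, this is a wording slip rather than a gap in the method, but in the matrix route the Nijenhuis equations are precisely the nontrivial part of the computation.
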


\begin{proof}
For each four-dimensional symplectic Lie algebra $(\g,\omega)$ from the Table 1, we find all compatible para-complex structures $J$, that is, such endomorphisms $J:\g\to \g$ that have the properties:
\begin{enumerate}
  \item $J^2=Id$.
  \item $\omega(JX,Y)+\omega(X,JY)=0$.
  \item $[X,Y] + [JX,JY] -J[JX,Y] -J[X,JY] = 0$.
\end{enumerate}
Then $g(X,Y) = \omega(X,JY)$ is a pseudo-Riemannian metric of signature (2,2) and $(\omega, J, g)$ forms a para-K\"{a}hler structure on the Lie algebra $\g$.
It is clear that the para-complex structure $J$ is determined up to a sign. We will indicate only one $J$ of two $\pm J$.
The para-complex structures $J$ will be represented by matrices $J=J^i_j e_i\otimes e^j$ in the basis $e_1, e_2, e_3, e_4$ of the Lie algebra $\g$.
Then the integrability condition 3 has the form $J^l_i J^m_j C_{lm}^k - J^l_i J^k_m C_{lj}^m - J^l_j J^k_m C_{il}^m + C_{ij}^k =0$.
If $J$ is a para-complex structure compatible with $\omega$, then the metric tensor $g$ can be found by the formula $g=\omega\cdot J$, i.e. $g_{ij}=\omega_{jk}J^k_j$.
For each para-K\"{a}hler structure $(\g, \omega, J, g)$, the curvature and Ricci tensors are calculated in the Maple system. We represent the Ricci operator $RIC = Ric\cdot g^{-1}$ from which the Ricci tensor and scalar curvature $S$ is easily found.
\end{proof}

\subsection{Lie algebra $\rr_2\rr_2$} \label{G_1}
Nonzero Lie bracket: $[e_1,e_2] = e_2, [e_3,e_4] = e_4$. Symplectic structure: $\omega = e^1\wedge e^2 + \lambda e^1\wedge e^3 + e^3\wedge e^4$, $\lambda \ge 0$.
Cases $\lambda > 0$ and $\lambda =0$ significantly different.

\subsubsection{The case $\lambda > 0$} There are three para-complex structures $J$, that, together with the 2-form $\omega$ give three para-K\"{a}hler structures depending on the parameters $a$ and $b$, all of zero curvature:
There are three para-complex structures $J$, which together with the 2-form $\omega $ and metric $g=\omega\cdot J$ give three para-K\"{a}hler structures, all of zero curvature for any values of parameters $a$ and $b$:
$$
J_{1,1}= \left[ \begin {array}{cccc} -1&0&0&0\\ \noalign{\medskip}{\it a}&
1&0&0\\ \noalign{\medskip}0&0&1&0\\ \noalign{\medskip}0&0&{\it b}&
-1\end {array} \right], \quad
J_{1,2}= \left[ \begin {array}{cccc} -1&0&2&0\\ \noalign{\medskip}-{\it a}
&1&{\it a}&0\\ \noalign{\medskip}0&0&1&0\\ \noalign{\medskip}{\it
a}&-2&{\it b}&-1\end {array} \right],\quad
J_{1,3}=  \left[ \begin {array}{cccc} -1&0&0&0\\ \noalign{\medskip}{\it a}&
1&{\it b}&2\\ \noalign{\medskip}-2&0&1&0\\ \noalign{\medskip}{\it
b}&0&-{\it b}&-1\end {array} \right].
$$

\subsubsection{The case $\lambda =0$}
There are 5 para-K\"{a}hler structures:

1.	The para-K\"{a}hler structure with Hermitian Ricci tensor (i.e. $Ric(JX,JY) = Ric(X,Y)$) and with the Ricci operator $RIC = Ric\cdot g^{-1}$ and scalar curvature $S={\rm trace}(RIC)$:
$$
J_{2,1}= \left[ \begin {array}{cccc} -{\it a}&{\it b}&0&0
\\ \noalign{\medskip}-{\frac {{{\it a}}^{2}-1}{{\it b}}}&{\it
a}&0&0\\ \noalign{\medskip}0&0&{\it c}&-{\frac {{{\it c}}^
{2}-1}{{\it d}}}\\ \noalign{\medskip}0&0&{\it d}&-{\it c}
\end {array} \right], \quad
RIC_{2,1}= \left[ \begin {array}{cccc} -{\it b}&0&0&0\\ \noalign{\medskip}0&
-{\it b}&0&0\\ \noalign{\medskip}0&0&{\frac {{{\it c}}^{2}-1}{
{\it d}}}&0\\ \noalign{\medskip}0&0&0&{\frac {{{\it c}}^{2}-1}
{{\it d}}}\end {array} \right].$$

2.	The Einstein para-Kahler structure:
$$
J_{2,2}= \left[ \begin {array}{cccc} {\it a}+1&{\it b}&-1+{\it c}&
{\it b}\\ \noalign{\medskip}-{\frac {{\it a}\, \left( {\it
a}+2 \right) }{{\it b}}}&-{\it a}-1&-{\frac { \left( -1+{
\it c} \right) {\it a}}{{\it b}}}&-{\it a}
\\ \noalign{\medskip}{\it a}&{\it b}&{\it c}&{\it b}
\\ \noalign{\medskip}-{\frac { \left( -1+{\it c} \right) {\it
a}}{{\it b}}}&1-{\it c}&-{\frac {{{\it c}}^{2}-1}{{
\it b}}}&-{\it c}\end {array} \right], \quad
RIC_{2,2}= -\frac {3b}{2} Id.
$$

3. The para-K\"{a}hler structure with Hermitian Ricci tensor:
$$
J_{2,3}= \left[ \begin {array}{cccc} 1&0&0&0\\ \noalign{\medskip}{\it a}&-
1&0&0\\ \noalign{\medskip}0&0&{\it b}&-{\frac {{{\it b}}^{2}-1
}{{\it c}}}\\ \noalign{\medskip}0&0&{\it c}&-{\it b}
\end {array} \right],\qquad
RIC_{2,3}= \left[ \begin {array}{cccc} 0&0&0&0\\ \noalign{\medskip}0&0&0&0
\\ \noalign{\medskip}0&0&{\frac {{{\it b}}^{2}-1}{{\it c}}}&0
\\ \noalign{\medskip}0&0&0&{\frac {{{\it b}}^{2}-1}{{\it c}}}
\end {array} \right].
$$

4.	The para-K\"{a}hler structure with Hermitian Ricci tensor. Einstein for $b = c$:
$$
J_{2,4}=  \left[ \begin {array}{cccc} -{\it a}&{\it b}&0&0
\\ \noalign{\medskip}-{\frac {{{\it a}}^{2}-1}{{\it b}}}&{\it
a}&0&0\\ \noalign{\medskip}0&0&-1&{\it c}\\ \noalign{\medskip}0
&0&0&1\end {array} \right], \quad
RIC_{2,4}= \left[ \begin {array}{cccc} -{\it b}&0&0&0\\ \noalign{\medskip}0&
-{\it b}&0&0\\ \noalign{\medskip}0&0&-{\it c}&0
\\ \noalign{\medskip}0&0&0&-{\it c}\end {array} \right].
$$

5.	The para-K\"{a}hler structure of zero curvature:
$$
J_{2,5}= \left[ \begin {array}{cccc} 1&0&0&0\\ \noalign{\medskip}{\it a}&-
1&{\it b}&-2\\ \noalign{\medskip}2&0&-1&0\\ \noalign{\medskip}{
\it b}&0&-{\it b}&1\end {array} \right],\quad
g_{2,5}= \left[ \begin {array}{cccc} {\it a}&-1&{\it b}&-2
\\ \noalign{\medskip}-1&0&0&0\\ \noalign{\medskip}{\it b}&0&-{\it
b}&1\\ \noalign{\medskip}-2&0&1&0\end {array} \right].
$$

\subsection{Lie algebra $\rr\h_3$} \label{G_2}
Nonzero Lie bracket: $[e_1,e_2] = e_3$. Symplectic structure: $\omega = e^1\wedge e^4 + e^2\wedge e^3$. There are two para-K\"{a}hler structures of zero curvature:
$$
J_1= \left[ \begin {array}{cccc} {\it a}&-{\it b}&0&0
\\ \noalign{\medskip}{\frac {{{\it a}}^{2}-1}{{\it b}}}&-{\it
a}&0&0\\ \noalign{\medskip}{\it d}&{\it c}&{\it a}&{
\it b}\\ \noalign{\medskip}-{\frac {{\it c}\,{{\it a}}^{2}
+2\,{\it d}\,{\it a}\,{\it b}-{\it c}}{{{\it b}}^{
2}}}&{\it d}&-{\frac {{{\it a}}^{2}-1}{{\it b}}}&-{\it
a}\end {array} \right], \quad
J_2= \left[ \begin {array}{cccc} 1&-{\it b}&0&0\\
\noalign{\medskip}0&-1&0&0\\ \noalign{\medskip}-\frac{bd}{2}&{\it c}&
1&{\it b}\\ \noalign{\medskip}{\it d}&-\frac{bd}{2}&0&-1\end {array} \right].
$$

\subsection{Lie algebra $\rr\rr_{3,0}$} \label{G_3}
Nonzero Lie bracket: $[e_1,e_2] = e_2, [e_1,e_3] = 0$. Symplectic structure: $\omega = e^1\wedge e^2 + e^3\wedge e^4$. Two para-complex structures J.

1.	The para-K\"{a}hler structure with Hermitian Ricci tensor
$$
J_1= \left[ \begin {array}{cccc} -{\it a}&{\it c}&0&0
\\ \noalign{\medskip}-{\frac {{{\it a}}^{2}-1}{{\it c}}}&{\it
a}&0&0\\ \noalign{\medskip}0&0&{\it b}&-{\frac {{{\it b}}^
{2}-1}{{\it d}}}\\ \noalign{\medskip}0&0&{\it d}&-{\it b}
\end {array} \right],\quad
RIC_1 = \left[ \begin {array}{cccc} -{\it c}&0&0&0\\ \noalign{\medskip}0&
-{\it c}&0&0\\ \noalign{\medskip}0&0&0&0\\ \noalign{\medskip}0&0&0
&0\end {array} \right].
$$

2.	The para-K\"{a}hler structure of zero curvature
$$
J_2= \left[ \begin {array}{cccc} -1&0&0&0\\ \noalign{\medskip}{\it a}&
1&0&0\\ \noalign{\medskip}0&0&{\it b}&-{\frac {{{\it b}}^{2}-1
}{{\it c}}}\\ \noalign{\medskip}0&0&{\it c}&-{\it b}
\end {array} \right], \quad
g_2= \left[ \begin {array}{cccc} {\it a}&1&0&0\\ \noalign{\medskip}1&0
&0&0\\ \noalign{\medskip}0&0&{\it c}&-{\it b}
\\ \noalign{\medskip}0&0&-{\it b}&{\frac {{{\it b}}^{2}-1}{{
\it c}}}\end {array} \right].
$$

\subsection{Lie algebra $\rr\rr_{3,-1}$} \label{G_4}
Nonzero Lie bracket: $[e_1,e_2] = e_2$, $[e_1,e_3] = -e_3$.
Symplectic structure: $\omega =e^1\wedge e^4 +e^2\wedge e^3$.
Three compatible para-complex structures $J$.

Two para-K\"{a}hler structures with zero Ricci tensor:
$$
J_1= \left[ \begin {array}{cccc} 1&0&0&0\\ \noalign{\medskip}0&-1&0&0
\\ \noalign{\medskip}0&{\it a}&1&0\\ \noalign{\medskip}{\it b}
&0&0&-1\end {array} \right], \quad
J_2= \left[ \begin {array}{cccc} -1&0&0&0\\ \noalign{\medskip}0&-1&{\it
a}&0\\ \noalign{\medskip}0&0&1&0\\ \noalign{\medskip}{\it b}&0
&0&1\end {array} \right].
$$

Para-K\"{a}hler structure of zero curvature:
$$
J_3= \left[ \begin {array}{cccc} -{\it a}&0&0&-{\frac {{{\it a}}^{
2}-1}{{\it b}}}\\ \noalign{\medskip}0&-1&0&0\\ \noalign{\medskip}0
&0&1&0\\ \noalign{\medskip}{\it b}&0&0&{\it a}\end {array}
 \right], \quad
g_3= \left[ \begin {array}{cccc} {\it b}&0&0&{\it a}
\\ \noalign{\medskip}0&0&1&0\\ \noalign{\medskip}0&1&0&0
\\ \noalign{\medskip}{\it a}&0&0&{\frac {{{\it a}}^{2}-1}{{
\it b}}}\end {array} \right].
$$

\subsection{Lie algebra $\rr_{2}'$} \label{G_5}
Nonzero Lie bracket: $[e_1,e_3] = e_3$, $[e_1,e_4] = e_4$, $[e_2,e_3] = e_4$, $[e_2,e_4] = -e_3$.
Symplectic structure: $\omega =e^1\wedge e^4 +e^2\wedge e^3$.
Three compatible para-complex structures $J$:

1.	The para-Kahler structure with Hermitian Ricci tensor
$$
J_1= \left[ \begin {array}{cccc} -{\it a}&{\it b}&{\it c}&-{\it d}\\
\noalign{\medskip}-{\it b}&-{\it a}&{\it d}&{\it c}\\ \noalign{\medskip}J_1^3&J_{32}&{\it a}&-{\it b}\\
\noalign{\medskip}J_{41} & J_2^4  &{\it b}&{\it a}\end {array}
 \right], \quad
RIC_1=\left[ \begin {array}{cccc} 2\,{\it d}&-2\,{\it c}&0&0\\ \noalign{\medskip}2\,{\it c}&2\,{\it d}&0&0 \\
\noalign{\medskip}0&0&2\,{\it d}&-2\,{\it c}\\
\noalign{\medskip}0&0&2\,{\it c}&2\,{\it d}\end {array}
 \right],
$$
where $J_1^3= J_2^4= -\frac {2\,{\it d}\,{\it a}\,{
\it b}+ c(a^{2}-b^{2}-1)}{{{\it d}}^{2}+{{\it c}}^{2}}$ and $J_2^3= -J_1^4 =\frac {2\,{\it c}\,{\it a}\,{\it b} -d(a^{2}-b^{2}-1)} {{{\it d}}^{2}+{{\it c }}^{2}}$.

2.	The para-K\"{a}hler Einstein structure
$$
J_2= \left[ \begin {array}{cccc} -\frac{a\, b+c^{2}+2}{2}&-{\it c}&0&{\it b}\\ \noalign{\medskip}0&-1&0&0\\
\noalign{\medskip}-{\frac {{\it c}\, \left( {\it a}\,{\it b}+{{\it c}}^{2}+4 \right) }{{2\, b}}}&{\it a}&1&{\it c}\\
\noalign{\medskip}-{\frac {{{\it a}}^{2}{{\it b}}^{2}+2\,{\it a}\,{\it b}\,{{\it c}}^{2}+{{\it c}}^{4}+4\,{\it a}\,{\it b}+4\,{{\it c}}^{2}}{{4\, b}}}&-{\frac {{\it c}\, \left( {\it a}\,{\it b}+{{\it c}}^{2}+4 \right) }{{2\, b}}}&0&\frac{a\, b+c^{2}+2}{2}\end {array} \right], \quad
RIC_2= -\frac {3b}{2}\, Id.
$$

3.	The para-K\"{a}hler structure of zero curvature
$$
J_3= \left[ \begin {array}{cccc} -1&0&0&0\\ \noalign{\medskip}0&-1&0&0
\\ \noalign{\medskip}{\it a}&{\it b}&1&0\\ \noalign{\medskip}-
{\it b}&{\it a}&0&1\end {array} \right], \quad
g_3= \left[ \begin {array}{cccc} -{\it b}&{\it a}&0&1
\\ \noalign{\medskip}{\it a}&{\it b}&1&0\\ \noalign{\medskip}0
&1&0&0\\ \noalign{\medskip}1&0&0&0\end {array} \right].
$$

\subsection{Lie algebra $\rr_{4,0}$} \label{G_6}
Nonzero Lie bracket: $[e_4,e_1] = e_1$, $[e_4,e_3] = e_2$.
Symplectic structures: $\omega_+ =e^1\wedge e^4 +e^2\wedge e^3$ and $\omega_- =e^1\wedge e^4 -e^2\wedge e^3$.
Two para-K\"{a}hler structures with zero Ricci tensor: .
$$
J_+= -J_-=\ \left[ \begin {array}{cccc} -1&0&0&{\it a}\\ \noalign{\medskip}0&
1&{\it b}&0\\ \noalign{\medskip}0&0&-1&0\\ \noalign{\medskip}0&0&0
&1\end {array} \right].
$$

\subsection{Lie algebra $\rr_{4,-1}$} \label{G_7}
Nonzero Lie bracket: $[e_4,e_1] = e_1$, $[e_4,e_2] =-e_2$, $[e_4,e_3] = e_2-e_3$.
Symplectic structure: $\omega =e^1\wedge e^3 +e^2\wedge e^4$.
One para-K\"{a}hler structure with zero Ricci tensor:
$$
J=  \left[ \begin {array}{cccc} -1&0&{\it a}&0\\ \noalign{\medskip}0&
1&0&{\it b}\\ \noalign{\medskip}0&0&1&0\\ \noalign{\medskip}0&0&0&
-1\end {array} \right],\quad
g=  \left[ \begin {array}{cccc} 0&0&1&0\\ \noalign{\medskip}0&0&0&-1
\\ \noalign{\medskip}1&0&-{\it a}&0\\ \noalign{\medskip}0&-1&0&-{
\it b}\end {array} \right].
$$

\subsection{Lie algebra $\rr_{4,-1,\beta}$} \label{G_8}
Nonzero Lie bracket: $[e_4,e_1] = e_1$, $[e_4,e_2] = -e_2$, $[e_4,e_3] = \beta e_3$.
Symplectic structure: $\omega =e^1\wedge e^2 +e^3\wedge e^4$, $-1< \beta <0$.
Three compatible para-complex structures $J$:

One para-K\"{a}hler structure with Hermitian Ricci tensor:
$$
J_1= \left[ \begin {array}{cccc} -1&0&0&0\\ \noalign{\medskip}0&1&0&0
\\ \noalign{\medskip}0&0&{\it a}&-{\frac {{{\it a}}^{2}-1}{{
\it b}}}\\ \noalign{\medskip}0&0&{\it b}&-{\it a}
\end {array} \right], \quad
RIC= \left[ \begin {array}{cccc} 0&0&0&0\\ \noalign{\medskip}0&0&0&0
\\ \noalign{\medskip}0&0&{\it b}\,{\beta}^{2}&0\\ \noalign{\medskip}0&0
&0&{\it b}\,{\beta}^{2}\end {array} \right].
$$

Two para-K\"{a}hler structure with zero Ricci tensor:
$$
J_2= \left[ \begin {array}{cccc} 1&0&0&0\\ \noalign{\medskip}{\it a}&-
1&0&0\\ \noalign{\medskip}0&0&1&{\it b}\\ \noalign{\medskip}0&0&0&
-1\end {array} \right], \quad
J_3= \left[ \begin {array}{cccc} -1&{\it a}&0&0\\ \noalign{\medskip}0&
1&0&0\\ \noalign{\medskip}0&0&1&{\it b}\\ \noalign{\medskip}0&0&0&
-1\end {array} \right].
$$

\subsection{Lie algebra $\rr_{4,-1,-1}$} \label{G_9}
Nonzero Lie bracket: $[e_4,e_1] = e_1$, $[e_4,e_2]= -e_2$, $[e_4,e_3] = -e_3$.
Symplectic structure: $\omega =e^1\wedge e^2 +e^3\wedge e^4$.
Five compatible para-complex structures $J$:

The para-K\"{a}hler structure with Hermitian Ricci tensor:
$$
J_1= \left[ \begin {array}{cccc} -1&-{\frac {{{\it a}}^{2}}{{\it b}}}&{\it a}&-{\frac {{\it a}\, \left( {\it c}-1 \right) }{
{\it b}}}\\ \noalign{\medskip}0&1&0&0\\ \noalign{\medskip}0&-{
\frac {{\it a}\, \left( {\it c}-1 \right) }{{\it b}}}&{
\it c}&-{\frac {{{\it c}}^{2}-1}{{\it b}}}
\\ \noalign{\medskip}0&-{\it a}&{\it b}&-{\it c}
\end {array} \right], \quad
RIC_1=  \left[ \begin {array}{cccc} 0&0&0&0\\ \noalign{\medskip}0&0&-{\it
a}&0\\ \noalign{\medskip}0&0&{\it b}&0\\ \noalign{\medskip}{
\it a}&0&0&{\it b}\end {array} \right]
$$

Three para-K\"{a}hler structures with zero Ricci tensor:
$$
J_2= \left[ \begin {array}{cccc} {\it a}&0&0&-{\frac {{{\it a}}^{2
}-1}{{\it b}}}\\
\noalign{\medskip}{\it c}&-{\it a}&{\it
b}&-{\it d}\\ \noalign{\medskip}{\it d}&-{\frac {{{\it
a}}^{2}-1}{{\it b}}}&{\it a}&{\frac {{\it c}\,{{\it
a}}^{2}-2\,{\it b}\,{\it d}\,{\it a}-{\it c}}{{{
\it b}}^{2}}}\\ \noalign{\medskip}{\it b}&0&0&-{\it a}
\end {array} \right], \quad
J_3=  \left[ \begin {array}{cccc} -1&{\it a}&0&0\\ \noalign{\medskip}0&
1&0&0\\ \noalign{\medskip}0&0&1&{\it b}\\ \noalign{\medskip}0&0&0&
-1\end {array} \right],\quad
J_4=  \left[ \begin {array}{cccc} -1&0&0&{\frac {2a}{{\it b
}}}\\ \noalign{\medskip}{\it b}&1&0&-{\it a}
\\ \noalign{\medskip}{\it a}&{\frac {2a}{{\it b}}}
&-1&{\it c}\\ \noalign{\medskip}0&0&0&1\end {array} \right].
$$

The para-K\"{a}hler structure of zero curvature:
$$
J_5= \left[ \begin {array}{cccc} 1&0&0&{\it a}\\ \noalign{\medskip}0&-1&0&0\\ \noalign{\medskip}0&{\it a}&1&{\it b}
\\ \noalign{\medskip}0&0&0&-1\end {array} \right],
g_5= \left[ \begin {array}{cccc} 0&-1&0&0\\ \noalign{\medskip}-1&0&0&-{
\it a}\\ \noalign{\medskip}0&0&0&-1\\ \noalign{\medskip}0&-{\it
a}&-1&-{\it b}\end {array} \right].
$$

\subsection{Lie algebra $\rr_{4,-\alpha,\alpha}$} \label{G_10}
Nonzero Lie bracket: $[e_4,e_1] = e_1$, $[e_4,e_2]= -\alpha e_2$, $[e_4,e_3] = \alpha e_3$.
Symplectic structure: $\omega =e^1\wedge e^4 +e^2\wedge e^3$, $0<\alpha <1$.
Three compatible para-complex structures $J$.

The para-K\"{a}hler structure with Hermitian Ricci tensor:
$$
J_1=   \left[ \begin {array}{cccc} -{\it a}&0&0&-{\frac {{{\it a}}^{
2}-1}{{\it b}}}\\ \noalign{\medskip}0&-1&0&0\\ \noalign{\medskip}0
&0&1&0\\ \noalign{\medskip}{\it b}&0&0&{\it a}\end {array}
 \right], \quad
RIC_1= \left[ \begin {array}{cccc} {\it b}&0&0&0\\ \noalign{\medskip}0&0
&0&0\\ \noalign{\medskip}0&0&0&0\\ \noalign{\medskip}0&0&0&{\it b}
\end {array} \right].
$$

Two para-K\"{a}hler structure with zero Ricci tensor:
$$
J_2= \left[ \begin {array}{cccc} 1&0&0&{\it a}\\ \noalign{\medskip}0&-
1&{\it b}&0\\ \noalign{\medskip}0&0&1&0\\ \noalign{\medskip}0&0&0&
-1\end {array} \right],\quad
J_3=\left[ \begin {array}{cccc} 1&0&0&{\it a}\\ \noalign{\medskip}0&1&0&0\\ \noalign{\medskip}0&{\it b}&-1&0\\ \noalign{\medskip}0&0&0&
-1\end {array} \right].
$$

\subsection{Lie algebra $\dd_{4,1}$} \label{G_11}
Nonzero Lie bracket: $[e_1,e_2] = e_3$, $[e_4,e_3] = e_3$, $[e_4,e_1] = e_1$.
Symplectic structures: $\omega_1 =e^1\wedge e^2 -e^3\wedge e^4$ and $\omega_2 =e^1\wedge e^2 -e^3\wedge e^4 +e^2\wedge e^4$.
\subsubsection{The case symplectic structure $\omega_1 =e^1\wedge e^2 -e^3\wedge e^4$}
Five para-K\"{a}hler structures:

The para-K\"{a}hler structure with Hermitian Ricci tensor
$$
J_{1,1}=  \left[ \begin {array}{cccc} -{\it a}&0&0&{\frac {{{\it a}}^{2
}-1}{{\it b}}}\\ \noalign{\medskip}{\it c}&{\it a}&{\it
b}&{\it d}\\ \noalign{\medskip}{\it d}&-{\frac {{{\it
a}}^{2}-1}{{\it b}}}&-{\it a}&-{\frac {{\it c}\,{{\it
a}}^{2}+2\,{\it a}\,{\it b}\,{\it d}-{\it c}}{{{
\it b}}^{2}}}\\ \noalign{\medskip}-{\it b}&0&0&{\it a}
\end {array} \right],\quad
RIC_{1,1}= \left[ \begin {array}{cccc} 0&0&2\,{\it b}&0\\ \noalign{\medskip}0
&0&0&0\\ \noalign{\medskip}0&0&0&0\\ \noalign{\medskip}0&-2\,{\it
b}&0&0\end {array} \right].
$$

The para-K\"{a}hler structure with zero Ricci tensor
$$
J_{1,2}= \left[ \begin {array}{cccc} 1&0&0&2\,{\frac {{\it a}}{{\it b}}}\\
\noalign{\medskip}{\it b}&-1&0&{\it a}\\
\noalign{\medskip}{\it a}&-2\,{\frac {{\it a}}{{\it b}}}&1&{\it c}\\ \noalign{\medskip}0&0&0&-1\end {array} \right], \quad
g_{1,2}= \left[ \begin {array}{cccc} {\it b}&-1&0&{\it a}
\\ \noalign{\medskip}-1&0&0&-2\,{\frac {{\it a}}{{\it b}}}
\\ \noalign{\medskip}0&0&0&1\\ \noalign{\medskip}{\it a}&-2\,{
\frac {{\it a}}{{\it b}}}&1&{\it c}\end {array} \right].
$$

The para-K\"{a}hler Einstein structure
$$
J_{1,3} =\left[ \begin {array}{cccc} 1&{\frac {{{\it a}}^{2}}{{\it b}}}&{\it a}&-{\frac {{\it a}\, \left( {\it c}+1 \right) }{{
\it b}}}\\ \noalign{\medskip}0&-1&0&0\\ \noalign{\medskip}0&{
\frac {{\it a}\, \left( {\it c}+1 \right) }{{\it b}}}&{
\it c}&-{\frac {{{\it c}}^{2}-1}{{\it b}}}
\\ \noalign{\medskip}0&{\it a}&{\it b}&-{\it c}
\end {array} \right], \quad
RIC_{1,3}= -\frac{3b}{2}\, Id.
$$

Two para-K\"{a}hler structures of zero curvature
$$
J_{1,4}= \left[ \begin {array}{cccc} -1&{\it a}&0&0\\ \noalign{\medskip}0&
1&0&0\\ \noalign{\medskip}0&0&1&{\it b}\\ \noalign{\medskip}0&0&0&
-1\end {array} \right],\quad
J_{1,5}= \left[ \begin {array}{cccc} -1&0&0&-{\it a}\\ \noalign{\medskip}0
&1&0&0\\ \noalign{\medskip}0&{\it a}&-1&{\it b}
\\ \noalign{\medskip}0&0&0&1\end {array} \right].
$$

\subsubsection{The case symplectic structure $\omega_2 =e^1\wedge e^2 -e^3\wedge e^4 +e^2\wedge e^4$.}
One para-K\"{a}hler structure of zero curvature
$$
J_{2,1}= \left[ \begin {array}{cccc} 1&{\it a}&0&0\\ \noalign{\medskip}0&-
1&0&0\\ \noalign{\medskip}0&0&-1&{\it b}\\ \noalign{\medskip}0&0&0
&1\end {array} \right], \quad
g_{2,1}= \left[ \begin {array}{cccc} 0&-1&0&0\\ \noalign{\medskip}-1&-{\it
a}&0&1\\ \noalign{\medskip}0&0&0&-1\\ \noalign{\medskip}0&1&-1&{
\it b}\end {array} \right].
$$

\subsection{Lie algebra $\dd_{4,2}$} \label{G_12}
Nonzero Lie bracket: $[e_1,e_2] = e_3$, $[e_4,e_3] = e_3$, $[e_4,e_1] = 2e_1$, $[e_4,e_2] = -e_2$.
Three symplectic structures: $\omega_1 =e^1\wedge e^2 -e^3\wedge e^4$, $\omega_2 =e^1\wedge e^4 +e^2\wedge e^3$ и $\omega_3 =e^1\wedge e^4 -e^2\wedge e^3$.
\subsubsection{The case symplectic structure $\omega_1 =e^1\wedge e^2 -e^3\wedge e^4$}
The Eistein para-K\"{a}hler structures:
$$
J_{1,1}= \left[ \begin {array}{cccc} -1&0&0&0\\
\noalign{\medskip}0&1&0&0\\ \noalign{\medskip}0&0&{\it a}&{\it b}\\ \noalign{\medskip}0
&0&-{\frac {{{\it a}}^{2}-1}{{\it b}}}&-{\it a}
\end {array} \right], \quad
g_{1,1}=\left[ \begin {array}{cccc} 0&1&0&0\\ \noalign{\medskip}1&0&0&0\\ \noalign{\medskip}0&0&{\frac {{{\it a}}^{2}-1}{{\it b}}}&{
\it a}\\ \noalign{\medskip}0&0&{\it a}&{\it b}\end {array}
 \right], \quad Ric_{1,1}=\frac {3(a^2-1)}{2\,b}\,g_{1,1}.
$$
Two para-K\"{a}hler Ricci-flat structures:
$$
J_{1,2}=\left[ \begin {array}{cccc} -1&{\it a}&0&0\\ \noalign{\medskip}0&
1&0&0\\ \noalign{\medskip}0&0&1&{\it b}\\ \noalign{\medskip}0&0&0&
-1\end {array} \right],\quad
g_{1,2}=\left[ \begin {array}{cccc} 0&1&0&0\\ \noalign{\medskip}1&-{\it a
}&0&0\\ \noalign{\medskip}0&0&0&1\\ \noalign{\medskip}0&0&1&{\it b
}\end {array} \right],
$$

$$
J_{1,3} =\left[ \begin {array}{cccc} 1&0&0&0\\ \noalign{\medskip}{\it a}&-
1&0&0\\ \noalign{\medskip}0&0&1&{\it b}\\ \noalign{\medskip}0&0&0&
-1\end {array} \right],\quad
g_{1,3} =\left[ \begin {array}{cccc} {\it a}&-1&0&0\\ \noalign{\medskip}-1
&0&0&0\\ \noalign{\medskip}0&0&0&1\\ \noalign{\medskip}0&0&1&{\it
b}\end {array} \right].
$$

\subsubsection{The case symplectic structure $\omega_2 =e^1\wedge e^4 +e^2\wedge e^3$}
Two para-K\"{a}hler structures:
$$
J_{2,1}= \left[ \begin {array}{cccc} {\it a}&0&0&2\,{\frac {{{\it a}}^
{2}-1}{{\it b}}}\\ \noalign{\medskip}0&-{\it a}&{\it b}&0
\\ \noalign{\medskip}0&-{\frac {{{\it a}}^{2}-1}{{\it b}}}&{
\it a}&0\\ \noalign{\medskip}-1/2\,{\it b}&0&0&-{\it a}
\end {array} \right],\quad
RIC_{2,1}=  \left[ \begin {array}{cccc} -3\,{\it b}&0&0&0
\\ \noalign{\medskip}0&0&0&0\\ \noalign{\medskip}0&0&0&0
\\ \noalign{\medskip}0&0&0&-3\,{\it b}\end {array} \right].
$$

$$
J_{2,2}=  \left[ \begin {array}{cccc} -{\it a}&0&0&-{\it b}\,(a+1)\\
\noalign{\medskip}0&1&0&0\\
\noalign{\medskip}0&{\it b}&-1&0\\
\noalign{\medskip}{\frac {{\it a}-1}{{\it b}}}&0
&0&{\it a}\end {array} \right],\quad
RIC_{2,2}= \left[ \begin {array}{cccc} 4\,{\frac {{\it a}-1}{{\it b}}}&0
&0&0\\ \noalign{\medskip}0&0&0&0\\ \noalign{\medskip}0&0&0&0
\\ \noalign{\medskip}0&0&0&4\,{\frac {{\it a}-1}{{\it b}}}
\end {array} \right].
$$

\subsubsection{The case symplectic structure $\omega_3 =e^1\wedge e^4 -e^2\wedge e^3$}
Nine para-K\"{a}hler structures of nonzero curvature and with Hermitian Ricci tensor:
$$
J_{3,1}= \left[ \begin {array}{cccc} -{\it a}&0&0&2\,{\frac {{{\it a}}^{2}-1}{{\it b}}}\\ \noalign{\medskip}0&{\it a}&{\it b}&0
\\ \noalign{\medskip}0&-{\frac {{{\it a}}^{2}-1}{{\it b}}}&-{
\it a}&0\\ \noalign{\medskip}-1/2\,{\it b}&0&0&{\it a}
\end {array} \right], \quad
RIC_{3,1}= \left[ \begin {array}{cccc} -3\,{\it b}&0&0&0\\ \noalign{\medskip}0&0&0&0\\ \noalign{\medskip}0&0&0&0
\\ \noalign{\medskip}0&0&0&-3\,{\it b}\end {array} \right].
$$

$$
J_{3,2}= \left[ \begin {array}{cccc} {\it a}&0&0&-{\it a}\,{\it b}
-{\it b}\\ \noalign{\medskip}0&-1&0&0\\ \noalign{\medskip}0&{\it
b}&1&0\\ \noalign{\medskip}{\frac {{\it a}-1}{{\it b}}}&0&0
&-{\it a}\end {array} \right],\quad
RIC_{3,2}= \left[ \begin {array}{cccc} 4\,{\frac {{\it a}-1}{{\it b}}}&0
&0&0\\ \noalign{\medskip}0&0&0&0\\ \noalign{\medskip}0&0&0&0
\\ \noalign{\medskip}0&0&0&4\,{\frac {{\it a}-1}{{\it b}}}
\end {array} \right].
$$

$$
J_{3,3}= \left[ \begin {array}{cccc} 1&{\it a}&0&{\it a}\\
\noalign{\medskip}0&1&0&0\\
\noalign{\medskip}2&{\it b}&-1&{\it a}\\ \noalign{\medskip}0&-2&0&-1\end {array} \right],\quad
RIC_{3,3}=  \left[ \begin {array}{cccc} 0&0&0&0\\ \noalign{\medskip}1&0&0&0
\\ \noalign{\medskip}0&0&0&0\\ \noalign{\medskip}0&0&-1&0\end {array}
 \right].
$$

$$
J_{3,4}= \left[ \begin {array}{cccc} 1&{\it a}&0&-{\it a}\\ \noalign{\medskip}0&1&0&0\\ \noalign{\medskip}-2&{\it b}&-1&{
\it a}\\ \noalign{\medskip}0&2&0&-1\end {array} \right],\quad
RIC_{3,4}= \left[ \begin {array}{cccc} 0&0&0&0\\ \noalign{\medskip}-1&0&0&0\\ \noalign{\medskip}0&0&0&0\\ \noalign{\medskip}0&0&1&0\end {array}
 \right].
$$

$$
J_{3,5}= \left[ \begin {array}{cccc} 0&{\it a}&1&{\it b}\\ \noalign{\medskip}0&0&0&-1\\ \noalign{\medskip}1&{\it b}&0&{
\it a}\\ \noalign{\medskip}0&-1&0&0\end {array} \right],\quad
g_{3,5}=\left[ \begin {array}{cccc} 0&-1&0&0\\ \noalign{\medskip}-1&-{\it
b}&0&-{\it a}\\ \noalign{\medskip}0&0&0&-1
\\ \noalign{\medskip}0&-{\it a}&-1&-{\it b}\end {array}
 \right],\quad Ric_{3,5}=0.
$$

$$
J_{3,6}=  \left[ \begin {array}{cccc} -1&0&0&0\\
\noalign{\medskip}{\it a}&1&{\it a}&0\\
\noalign{\medskip}0&0&-1&0\\
\noalign{\medskip}{\it a}&0&{\it a}&1\end {array} \right],\quad
RIC_{3,6}=  \left[ \begin {array}{cccc} 3\,{\it a}&0&-3/2\,{\it a}&0
\\ \noalign{\medskip}0&0&0&-3/2\,{\it a}\\ \noalign{\medskip}3/2\,
{\it a}&0&0&0\\ \noalign{\medskip}0&3/2\,{\it a}&0&3\,{\it
a}\end {array} \right].
$$

$$
J_{3,7}=\left[ \begin {array}{cccc} -1&0&0&0\\
\noalign{\medskip}{\it a}&1&-{\it a}&0\\
\noalign{\medskip}0&0&-1&0\\
\noalign{\medskip}-{\it a}&0&{\it a}&1\end {array} \right],\quad
RIC_{3,7}=\left[ \begin {array}{cccc} -3\,{\it a}&0&-3/2\,{\it a}&0
\\ \noalign{\medskip}0&0&0&-3/2\,{\it a}\\ \noalign{\medskip}3/2\,
{\it a}&0&0&0\\ \noalign{\medskip}0&3/2\,{\it a}&0&-3\,{\it
a}\end {array} \right].
$$

$$
J_{3,8}= \left[ \begin {array}{cccc} {\it a}-1&{\it b}&{\it a}&-2
\,{\frac {{\it b}\, \left( {\it a}-2 \right) }{{\it a}}} \\
\noalign{\medskip}1/2\,{\frac {{{\it a}}^{2}}{{\it b}}}&1/2
\,{\it a}+1&1/2\,{\frac {{{\it a}}^{2}}{{\it b}}}&-{\it
a}\\ \noalign{\medskip}-1/2\,{\it a}&-1/2\,{\frac {{\it b}\,{\it a}+4\,{\it b}}{{\it a}}}&-1/2\,{\it a}-1&{\it b}\\ \noalign{\medskip}1/2\,{\frac {{{\it a}}^{2}}{{\it b}
}}&1/2\,{\it a}&1/2\,{\frac {{{\it a}}^{2}}{{\it b}}}&-{\it a}+1\end {array} \right],
$$
$$
RIC_{3,8}=\left[ \begin {array}{cccc} 3/2\,{\frac {{{\it a}}^{2}}{{\it b}}}&0&-3/4\,{\frac {{{\it a}}^{2}}{{\it b}}}&0 \\ \noalign{\medskip}-3/4\,{\it a}&0&0&-3/4\,{\frac {{{\it a}}^{2}}{{\it b}}}\\ \noalign{\medskip}3/4\,{\frac {{{\it a}}^{2}}{{\it b}}}&0&0&0\\ \noalign{\medskip}0&3/4\,{\frac {{{\it a}}^{2}}{{\it b}}}&3/4\,{\it a}&3/2\,{\frac {{{\it a}}^{2}}{{\it b}}}\end {array} \right].
$$

$$
J_{3,9}=\left[ \begin {array}{cccc} -{\it a}-1&{\it b}&{\it a}&2
\,{\frac {{\it b}\, \left( {\it a}+2 \right) }{{\it a}}}
\\ \noalign{\medskip}1/2\,{\frac {{{\it a}}^{2}}{{\it b}}}&-1/
2\,{\it a}+1&-1/2\,{\frac {{{\it a}}^{2}}{{\it b}}}&-{\it
a}\\ \noalign{\medskip}-1/2\,{\it a}&-1/2\,{\frac {-{\it b
}\,{\it a}+4\,{\it b}}{{\it a}}}&1/2\,{\it a}-1&{\it
b}\\ \noalign{\medskip}-1/2\,{\frac {{{\it a}}^{2}}{{\it b
}}}&1/2\,{\it a}&1/2\,{\frac {{{\it a}}^{2}}{{\it b}}}&{
\it a}+1\end {array} \right],
$$
$$
RIC_{3,9}=\left[ \begin {array}{cccc} -3/2\,{\frac {{{\it a}}^{2}}{{\it
b}}}&0&-3/4\,{\frac {{{\it a}}^{2}}{{\it b}}}&0
\\ \noalign{\medskip}-3/4\,{\it a}&0&0&-3/4\,{\frac {{{\it a}}
^{2}}{{\it b}}}\\ \noalign{\medskip}3/4\,{\frac {{{\it a}}^{2}
}{{\it b}}}&0&0&0\\ \noalign{\medskip}0&3/4\,{\frac {{{\it a}}
^{2}}{{\it b}}}&3/4\,{\it a}&-3/2\,{\frac {{{\it a}}^{2}}{
{\it b}}}\end {array} \right].
$$

\subsection{Lie algebra $\dd_{4,\lambda}$} \label{G_13}
Nonzero Lie bracket: $[e_1,e_2] = e_3$, $[e_4,e_3] = e_3$, $[e_4,e_1] = \lambda e_1$, $[e_4,e_2] = (1-\lambda)e_2$, $\lambda\ge \frac 12$, $\lambda \neq 1,2$.
Symplectic structure: $\omega =e^1\wedge e^2 -e^3\wedge e^4$. Three para-K\"{a}hler structures of nonzero curvature.

Two Ricci-flat para-K\"{a}hler metrics:
$$
J_1= \left[ \begin {array}{cccc} 1&{\it a}&0&0\\ \noalign{\medskip}0&-
1&0&0\\ \noalign{\medskip}0&0&-1&{\it b}\\ \noalign{\medskip}0&0&0
&1\end {array} \right],\quad
g_1=\left[ \begin {array}{cccc} 0&-1&0&0\\
\noalign{\medskip}-1&-{\it a}&0&0\\
\noalign{\medskip}0&0&0&-1\\
\noalign{\medskip}0&0&-1&{
\it b}\end {array} \right].
$$

$$
J_2 =\left[ \begin {array}{cccc} 1&0&0&0\\ \noalign{\medskip}{\it a}&-1&0&0\\ \noalign{\medskip}0&0&1&{\it b}\\ \noalign{\medskip}0&0&0&
-1\end {array} \right],\quad
g_2=  \left[ \begin {array}{cccc} {\it a}&-1&0&0\\
\noalign{\medskip}-1 &0&0&0\\
\noalign{\medskip}0&0&0&1\\
\noalign{\medskip}0&0&1&{\it b}\end {array} \right].
$$

The para-K\"{a}hler Einstein metric:
$$
J_3= \left[ \begin {array}{cccc} 1&0&0&0\\
\noalign{\medskip}0&-1&0&0 \\
\noalign{\medskip}0&0&{\it a}&-{\frac {{{\it a}}^{2}-1}{{\it b}}}\\ \noalign{\medskip}0&0&{\it b}&-{\it a}
\end {array} \right],\quad
g_3= \left[ \begin {array}{cccc} 0&-1&0&0\\ \noalign{\medskip}-1&0&0&0
\\ \noalign{\medskip}0&0&-{\it b}&{\it a}\\ \noalign{\medskip}0
&0&{\it a}&-{\frac {{{\it a}}^{2}-1}{{\it b}}}\end {array}
 \right],\quad Ric_3=-\frac {3b}{2}\, g_3.
$$

\subsection{Lie algebra $\h_{4}$} \label{G_14}
Nonzero Lie bracket: $[e_1,e_2] = e_3$, $[e_4,e_3] = e_3$, $[e_4,e_1] = \frac 12 e_1$, $[e_4,e_2] = e_1+ \frac 12 e_2$.
Symplectic structure: $\omega =\pm (e^1\wedge e^2 -e^3\wedge e^4)$.
One para-K\"{a}hler structure with zero Ricci tensor
$$
J=  \left[ \begin {array}{cccc} -1&{\it a}&0&0\\ \noalign{\medskip}0&
1&0&0\\ \noalign{\medskip}0&0&1&{\it b}\\ \noalign{\medskip}0&0&0&
-1\end {array} \right],\quad
g=  \left[ \begin {array}{cccc} 0&1&0&0\\ \noalign{\medskip}1&-{\it a
}&0&0\\ \noalign{\medskip}0&0&0&1\\ \noalign{\medskip}0&0&1&{\it b
}\end {array} \right].
$$

\subsection{Lie algebra $\R^{4}$} \label{G_15}
Symplectic structure: $\omega =e^1\wedge e^2 +e^3\wedge e^4$ на $\mathbb{R}^4$.  Any para-complex structure $J$ on $\mathbb{R}^4$, compatible with $\omega$ defines a para-K\"{a}hler structure of zero curvature.

\section{Paracontact structures} \label{Paracontact}
An {\em almost paracontact structure} on a $(2n+1)$-dimensional (connected) smooth manifold $M$ is given by a triple $(\varphi,\xi,\eta)$, where $\varphi$ is a $(1,1)$-tensor, $\xi$ a global vector field and $\eta$ a $1$-form, such that
 \begin{equation}\label{almostpc}
\varphi (\xi)=0, \qquad \eta \circ \varphi =0, \qquad \eta (\xi)=1,  \qquad \varphi ^2 = Id -\eta \otimes \xi
\end{equation}
and the restriction $J$ of $\varphi$ on the horizontal distribution $D={\rm ker}(\eta)$ is an almost paracomplex structure (that is, the eigensubbundles $D^+, D^-$ corresponding to the eigenvalues $1,-1$ of $J$ have equal dimension $n$).

If $\eta$ satisfies $\eta\wedge (d\eta)^n \neq 0$, then $D$ defines a contact structure on $M$. In this case, $\eta$ is called a contact form, and $\xi$ is called the Reeb vector field. In this case, $\xi$ satisfies $d\eta(\xi,X)=0$ for any vector field $X$ on $M$.

A pseudo-Riemannian metric $g$ on $M$ is {\em compatible} with the almost paracontact structure $(\varphi,\xi,\eta)$ if
\begin{equation}\label{compg}
g(\varphi X, \varphi Y) =-g(X,Y) +\eta(X)\eta(Y).
\end{equation}

In such a case, $(\varphi,\xi,\eta,g)$ is said to be an {\em almost paracontact metric structure}.
Observe that $\eta(X)=g(\xi,X)$ for any compatible metric. Any almost paracontact structure admits compatible metrics, which have signature $(n+1,n)$. The {\em fundamental $2$-form} $\Phi$ of an almost paracontact metric structure $(\varphi,\xi,\eta,g)$ is defined by $\Phi(X,Y)=g(\varphi X,Y)$, for all tangent vector fields $X,Y$. If $\Phi=d\eta$,
then the manifold $(M,\eta,g)$ (or $(M,\varphi,\xi,\eta,g)$) is called a {\em paracontact metric manifold} and $g$ the {\em associated metric}.

It is easy to see that the associated metric $g$ is completely determined by the affinor $\varphi$:
\begin{equation}\label{compg}
g(X,Y) = d\eta(\varphi X,Y) + \eta(X)\eta(Y).
\end{equation}

Consider the product manifold $M\times \mathbb{R}$ and let $(X, f \partial_t)$ denote an arbitrary vector field on $M\times \mathbb{R}$. Then,
\begin{equation}\label{Norm}
\mathcal{J}(X, f \partial_t)=(\varphi X -f\xi,-\eta(X)\partial_t )
\end{equation}
defines an almost para-complex structure on $M\times \mathbb{R}$. When $\mathcal{J}$ is integrable, $(\varphi,\eta,\xi)$ is said to be normal.

The normal paracontact metric structure $(\eta,\xi, \varphi, g)$ is called para-Sasakian.

\subsection{Paracontact structures on central extensions of Lie algebras} \label{ParacontactLie}
A left-invariant contact structures on Lie groups $H$ are determined by their values on the Lie algebra $\h$.
In this sense, we will talk about contact Lie algebras $(\h, \eta)$.
As is known, \cite{Diatta} a contact Lie algebra with a nontrivial center is the central extension of symplectic Lie algebra $(\g, \omega)$ using the 2-cocycle $\omega$.
Recall the central extension procedure.

If we have a symplectic Lie algebra $(\g, \omega)$, then the central extension is a Lie algebra $\h=\g\times_\omega \mathbb{R}$ in which the Lie brackets are defined as follows:
\begin{enumerate}
  \item $[X, \xi]_{\h} = 0$,
  \item $[X, Y]_{\h} = [X, Y]_{\g} +\omega(X, Y)\xi$ for any $X, Y \in \g$,
\end{enumerate}
where $\xi=\partial_t$.
On the Lie algebra $\h=\g\times_\omega \mathbb{R}$ a contact form is given by the form $\eta =\xi^*$, and $\xi=\partial_t$ is the Reeb vector field.
The contact distribution $D$ corresponds to the subspace $\g \subset \h$.  If $x = X +\lambda\xi$ and $y = Y +\mu\xi$, where  $Y,Y \in \g$, $\lambda, \mu \in \mathbb{R}$,  then:
$$
d\eta (x,y) = -\eta([x, y]) = -\xi^*([X, Y]_{\g} +\omega(X, Y)\xi) =-\omega(X, Y).
$$
The affinor $\varphi$ of a paracontact structure is defined by its action on the contact subbundle $D =\g$.
Therefore, to define the affinor $\varphi$ on $\h=\g\times_\omega \mathbb{R}$, we can use almost para-complex structure $J$ on $\g$ as follows: if $x = X +\lambda\xi$, where $X\in\g$, then $\varphi(x) =JX$. Thus, the affinor $\varphi$ has a block form:
$$
\varphi=\left(
          \begin{array}{cc}
            J & 0 \\
            0 & 0 \\
          \end{array}
        \right).
$$
If the almost para-complex structure $J$ on $\g$ is also compatible with $\omega$, i.e., it has the property $\omega(JX,JY) = -\omega(X,Y)$, then we obtain a paracontact (pseudo-Riemannian) metric structure $(\eta, \xi, \varphi, h)$  on $\h=\g\times_\omega \mathbb{R}$, where $h$ is associated metric:
$$
h(x,y) =-d\eta(x, \varphi y) + \eta(x)\eta(y).
$$

As is known \cite{Smolen-19}, the paracontact metric structure $(\eta, \xi, \varphi, h)$ on the central extension $\h=\g\times_\omega \mathbb{R}$ is para-Sasakian if and only if the symplectic algebra $(\g,\omega,J,g)$ is para-K\"{a}hler.
As a consequence, we obtain the following conclusion: {\it the above classification of para-K\"{a}hler structures on four-dimensional Lie algebras gives a classification of para-Sasakian structures on five-dimensional Lie algebras with a nontrivial center.}

Note that in \cite{Calvar-Perron}, another version of the classification based on \cite{Calvar-15} was proposed.

It was shown in \cite{Smolen-19} that the curvature tensor of the para-Sasakian structure on the central extension $\h=\g\times_\omega \mathbb{R}$ is expressed through the curvature tensor of the para-K\"{a}hler structure on $\g$.

\begin{theorem} [\cite{Smolen-19}] \label{T2}
Suppose $(\omega, J,g)$ is the para-K\"{a}hler structure on the Lie algebra $\g$ and $(\eta, \xi, \varphi, h)$ is the corresponding contact para-Sasakian structure on the central extension $\h=\g\times_\omega \mathbb{R}$. Then the curvature tensor $R$ on $\h$ is expressed in terms of the curvature tensor $R_{\g}$ on $\g$, the form $\omega$ and the almost para-complex structure $J$ on $\g$ as follows: for any $X,Y\in\g$,
$$
R(X,Y)Z = R_{\h}(X,Y)Z -\frac 14 g(X,JZ)JY + \frac 14 g(Y,JZ)JX -\frac 12 g(X,JY)JZ,
$$
$$
R(X,Y)\xi = 0, \quad R(X, \xi) Z = \frac 14 g(X,Z)\xi, \quad R(X,\xi)\xi = -\frac 14 X,
$$
\end{theorem}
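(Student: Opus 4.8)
The plan is to reduce the whole statement to one computation: express the Levi-Civita connection $\nabla$ of the associated metric $h$ on $\h=\g\times_\omega\R$ through the Levi-Civita connection $\nabla^\g$ of $g$ on $\g$, and then substitute into $R(x,y)z=\nabla_x\nabla_y z-\nabla_y\nabla_x z-\nabla_{[x,y]}z$. First I would make $h$ explicit. Since $d\eta(x,y)=-\omega(X,Y)$ for $x=X+\lambda\xi$, $y=Y+\mu\xi$, and $\varphi y=JY$, the defining formula for the associated metric gives $h(x,y)=\omega(X,JY)+\lambda\mu=g(X,Y)+\lambda\mu$. Thus $h$ restricts to $g$ on $\g$, the line $\R\xi$ is orthogonal to $\g$, and $h(\xi,\xi)=1$.

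Next I would apply the Koszul formula. Because all fields are left-invariant the derivative terms drop out, leaving $2h(\nabla_x y,z)=h([x,y],z)-h([x,z],y)-h([y,z],x)$. Feeding in the extension brackets $[X,\xi]=0$ and $[X,Y]_\h=[X,Y]_\g+\omega(X,Y)\xi$ and splitting each output into its $\g$-part and its $\xi$-part yields
$$
\nabla_X Y=\nabla^\g_X Y+\tfrac12\omega(X,Y)\xi,\qquad \nabla_X\xi=\nabla_\xi X=\tfrac12 JX,\qquad \nabla_\xi\xi=0,
$$
where one uses $\omega(X,Y)=g(X,JY)$ together with the compatibility $g(JX,Z)=-g(X,JZ)$ to recognise the $\g$-part of $\nabla_X\xi$ as $\tfrac12 JX$.

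Finally I would substitute these into the curvature operator for each of the four index patterns, using throughout the para-K\"ahler identity $\nabla^\g J=0$, torsion-freeness $\nabla^\g_X Y-\nabla^\g_Y X=[X,Y]_\g$, the relation $\omega(X,JY)=g(X,Y)$, and $J^2=Id$. The pattern $R(X,Y)Z$ reproduces $R_\g(X,Y)Z$ together with the three asserted correction terms $\tfrac14\omega(Y,Z)JX-\tfrac14\omega(X,Z)JY-\tfrac12\omega(X,Y)JZ$ in its $\g$-part, plus a $\xi$-valued remainder $\tfrac12\bigl(\omega(X,\nabla^\g_Y Z)-\omega(Y,\nabla^\g_X Z)-\omega([X,Y]_\g,Z)\bigr)\xi$ that must vanish. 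The remaining patterns are short: $R(X,Y)\xi$ has $\g$-part $\tfrac12 J\bigl(\nabla^\g_X Y-\nabla^\g_Y X-[X,Y]_\g\bigr)=0$ and $\xi$-part $\tfrac14\bigl(\omega(X,JY)-\omega(Y,JX)\bigr)=\tfrac14\bigl(g(X,Y)-g(X,Y)\bigr)=0$; an entirely parallel computation gives $R(X,\xi)Z=\tfrac14 g(X,Z)\xi$; and $R(X,\xi)\xi=-\tfrac12\nabla_\xi(JX)=-\tfrac14 J^2X=-\tfrac14 X$.

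The main obstacle is the vanishing of the $\xi$-valued remainder in $R(X,Y)Z$, which is precisely where the para-K\"ahler hypothesis is indispensable. From $\nabla^\g\omega=0$ (equivalently $\nabla^\g J=0$) one obtains $\omega(X,\nabla^\g_Y Z)=-\omega(\nabla^\g_Y X,Z)$ and $\omega(Y,\nabla^\g_X Z)=-\omega(\nabla^\g_X Y,Z)$, so the remainder equals $\tfrac12\omega\bigl(\nabla^\g_X Y-\nabla^\g_Y X-[X,Y]_\g,Z\bigr)$, which is $0$ by torsion-freeness. Once this identity is in hand, the entire theorem follows from the routine, if slightly tedious, bookkeeping of $\g$- versus $\R\xi$-components described above.
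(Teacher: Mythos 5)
Your proof is correct, but note that the paper itself offers no proof of this statement: Theorem \ref{T2} is imported verbatim from the reference \cite{Smolen-19}, so there is no in-paper argument to compare against. Your computation is the natural (and almost certainly the cited paper's) route: $h(X+\lambda\xi,Y+\mu\xi)=g(X,Y)+\lambda\mu$, the Koszul formula for left-invariant metrics giving $\nabla_XY=\nabla^\g_XY+\tfrac12\omega(X,Y)\xi$, $\nabla_X\xi=\nabla_\xi X=\tfrac12 JX$, $\nabla_\xi\xi=0$, and then direct substitution into $R=[\nabla,\nabla]-\nabla_{[\cdot,\cdot]}$, with the para-K\"ahler identity $\nabla^\g J=0$ (equivalently $\nabla^\g\omega=0$) killing both the $\xi$-valued remainder in $R(X,Y)Z$ and the $\g$-part of $R(X,Y)\xi$. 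All sign conventions match the paper's ($d\eta(x,y)=-\omega(X,Y)$, $g(X,Y)=\omega(X,JY)$, $R(X,Y)=[\nabla_X,\nabla_Y]-\nabla_{[X,Y]}$), and your correction terms $\tfrac14\omega(Y,Z)JX-\tfrac14\omega(X,Z)JY-\tfrac12\omega(X,Y)JZ$ agree with the stated ones after rewriting $\omega(A,B)=g(A,JB)$; in doing so you also implicitly fix the obvious typo in the statement, where the symbol $R_{\h}$ on the right-hand side should read $R_{\g}$.
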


\begin{theorem} [\cite{Smolen-19}] \label{T3}
Suppose $(\omega, J,g)$ is the para-K\"{a}hler structure on the Lie algebra $\g$ and $(\eta, \xi, \varphi, h)$ is the corresponding contact para-Sasakian structure on the central extension $\h=\g\times_\omega \mathbb{R}$.
Then the Ricci tensor $Ric$ on $\h$ is expressed in terms of the Ricci tensor $Ric_{\g}$ on $\g$, the form $\omega$ and the almost para-complex structure $J$ on $\g$ as follows:
$$
Ric(Y,Z) = Ric_{\g}(Y,Z)+\frac 12 g(Y,Z),\quad Ric(Y,\xi)=0, \quad Ric(\xi,\xi)=-\frac n2.
$$
\end{theorem}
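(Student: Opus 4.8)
The plan is to deduce Theorem~\ref{T3} directly from the curvature formulas of Theorem~\ref{T2} by contraction, since in the convention $Ric_{jk}=R^i_{ijk}$ fixed in Section~\ref{Preface} the Ricci tensor is the trace $Ric(Y,Z)=\operatorname{trace}(X\mapsto R(X,Y)Z)$. First I would fix a basis $e_1,\dots,e_{2n}$ of $\g$ with dual basis $e^1,\dots,e^{2n}$, so that $e_1,\dots,e_{2n},\xi$ is a basis of $\h$ with $\eta=\xi^*$ dual to $\xi$. Then every Ricci trace splits as a sum over the $2n$ directions in $\g$ plus the single $\xi$-direction, and I would treat the three cases $Ric(Y,Z)$, $Ric(Y,\xi)$, $Ric(\xi,\xi)$ with $Y,Z\in\g$ in turn.

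Two algebraic facts about the para-K\"ahler data drive the computation, and I would record them at the outset. First, the compatibility conditions~(\ref{compatible}) give $g(JX,JY)=-g(X,Y)$ for all $X,Y\in\g$: indeed $g(JX,JY)=\omega(JX,J^2Y)=\omega(JX,Y)=-\omega(X,JY)=-g(X,Y)$, using $J^2=Id$. Second, since $J$ is paracomplex its $\pm1$-eigenspaces have equal dimension $n$, whence $\operatorname{trace}(J)=\sum_i e^i(Je_i)=0$.

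For $Ric(Y,Z)$ with $Y,Z\in\g$ I would substitute the first formula of Theorem~\ref{T2} into $\sum_i e^i(R(e_i,Y)Z)$ and add the $\xi$-direction term $\eta(R(\xi,Y)Z)$. The $\g$-valued leading part $\sum_i e^i(R_\g(e_i,Y)Z)$ reproduces $Ric_\g(Y,Z)$, and the three correction terms contract as follows: $-\tfrac14\sum_i g(e_i,JZ)\,e^i(JY)=-\tfrac14 g(JY,JZ)=\tfrac14 g(Y,Z)$; the term $\tfrac14 g(Y,JZ)\sum_i e^i(Je_i)$ vanishes because $\operatorname{trace}(J)=0$; and $-\tfrac12\sum_i g(e_i,JY)\,e^i(JZ)=-\tfrac12 g(JZ,JY)=\tfrac12 g(Y,Z)$. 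Using $R(\xi,Y)Z=-R(Y,\xi)Z=-\tfrac14 g(Y,Z)\xi$ yields the further contribution $-\tfrac14 g(Y,Z)$ from the $\xi$-direction. Summing gives $Ric(Y,Z)=Ric_\g(Y,Z)+(\tfrac14+\tfrac12-\tfrac14)g(Y,Z)=Ric_\g(Y,Z)+\tfrac12 g(Y,Z)$. For $Ric(Y,\xi)$ the $\g$-sum vanishes since $R(e_i,Y)\xi=0$, while $R(\xi,Y)\xi=\tfrac14 Y\in\g$ has no $\xi$-component, so $Ric(Y,\xi)=0$. For $Ric(\xi,\xi)$ only the $\g$-directions contribute, and each gives $e^i(R(e_i,\xi)\xi)=e^i(-\tfrac14 e_i)=-\tfrac14$, so the sum over the $2n$ directions yields $Ric(\xi,\xi)=-\tfrac{n}{2}$.

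Once Theorem~\ref{T2} is granted the argument is essentially bookkeeping, and the main point to get right is the contraction of the three correction terms in the first case: the identity $g(JX,JY)=-g(X,Y)$ and the vanishing of $\operatorname{trace}(J)$ must be applied in the correct slots, and the separate $\xi$-direction contribution $-\tfrac14 g(Y,Z)$ must not be overlooked. It is precisely these combined effects that convert the naive coefficient $\tfrac34$ from the $\g$-directions into the stated $\tfrac12$, so the delicate step is purely the careful sign and index tracking rather than any conceptual difficulty.
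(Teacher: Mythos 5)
Your derivation is correct, and it is worth noting that this paper itself contains no proof of Theorem~\ref{T3}: both Theorem~\ref{T2} and Theorem~\ref{T3} are quoted from \cite{Smolen-19}, so the only argument available within the paper is precisely the one you give --- contracting the curvature formulas of Theorem~\ref{T2} in the convention $Ric_{jk}=R^i_{ijk}$ fixed in Section~\ref{Preface}. Your two auxiliary facts are both right ($g(JX,JY)=-g(X,Y)$ follows from (\ref{compatible}) and $J^2=Id$ exactly as you say, and $\operatorname{trace}(J)=0$ because the $\pm 1$-eigenspaces of a para-complex structure have equal dimension), the bookkeeping $\tfrac14+\tfrac12-\tfrac14=\tfrac12$ --- including the easily missed $\xi$-direction contribution $\eta(R(\xi,Y)Z)=-\tfrac14\,g(Y,Z)$ --- is accurate, and the remaining traces $Ric(Y,\xi)=0$ and $Ric(\xi,\xi)=2n\cdot(-\tfrac14)=-\tfrac n2$ check out. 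One cosmetic remark: the first display of Theorem~\ref{T2} writes $R_{\h}$ where $R_{\g}$ is clearly intended (the statement says the curvature of $\h$ is expressed through that of $\g$); you implicitly read it the correct way, and your proof is otherwise complete as written.
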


The classification expressions of para-complex structures $J$ on $\h$ obtained in Section \ref{Lie_algebras} (Theorem \ref{T1}) and the formulas of the curvature tensor in Theorems \ref{T2} and \ref{T3} make it easy to obtain the curvature characteristics of para-Sasakian structures on central extensions $\h=\g\times_\omega \mathbb{R}$ in the five-dimensional case.

\end{document}